\numberwithin{equation}{section}
\newtheorem{theorem}[equation]{Theorem}
\newtheorem{definition}[equation]{Definition}
\newtheorem{remark}[equation]{Remark}
\newtheorem{example}[equation]{Example}
\begin{document}

\title[Stagnation-point form solutions of 2D Euler]
{Regularity of Stagnation-point form solutions of the Two-dimensional Euler Equations}

\author[Alejandro Sarria]{Alejandro Sarria}
\address{Department of Mathematics \\
University of Colorado Boulder \\
Boulder, CO 80309-0395 USA \\
}
\email{alejandro.sarria@colorado.edu}

\thanks{The author carried out part of this work as a doctoral candidate at the University of New Orleans.}

\subjclass[2010]{35B44, 35B65, 35B10, 35Q35}

\keywords{Two-dimensional Euler equations, stagnation-point form, blowup, global existence.}

\begin{abstract}
A class of semi-bounded solutions of the two-dimensional incompressible Euler equations satisfying either periodic or Dirichlet boundary conditions is examined. For smooth initial data, new blowup criteria in terms of the initial concavity profile is presented and the effects that the boundary conditions have on the global regularity of solutions is discussed. In particular, by deriving a formula for a general solution along Lagrangian trajectories, we describe how periodicity can prevent blow-up. This is as opposed to Dirichlet boundary conditions which, as we will show, allow for the formation of singularities in finite time. Lastly, regularity of solutions arising from non-smooth initial data is briefly discussed.
\end{abstract}

\maketitle

\section{Introduction}
\label{sec:intro}

We are concerned with regularity of solutions to the initial value problem
\begin{equation}
\label{eq:one}
\begin{cases}
u_{xt}+uu_{xx}-u_x^2=-2\int_0^1{u_x^2\,dx},\,\,\,\,\,\,\,\,\,&t>0,
\\
u(x,0)=u_0(x),\,\,\,\,\,\,\,\,\,\,\,\,\,\,&x\in[0,1],
\end{cases}
\end{equation}
with smooth initial data $u_0$ and either periodic
\begin{equation}
\label{eq:pbc}
\begin{split}
u(0,t)=u(1,t),\,\,\,\,\,\,\,\,\,\,\,\,u_x(0,t)=u_x(1,t),
\end{split}
\end{equation}
or Dirichlet boundary conditions
\begin{equation}
\label{eq:dbc}
\begin{split}
u(0,t)=u(1,t)=0.
\end{split}
\end{equation}
Equation (\ref{eq:one})i) was first derived in \cite{Proudman1} from the 2D incompressible Euler equations
\begin{equation}
\label{eq:2deuler}
\boldsymbol{u}_t+(\boldsymbol{u}\cdot\nabla)\boldsymbol{u}=-\nabla p,\,\,\,\,\,\,\,\,\,\,\,\,\,\,\,\nabla\cdot\boldsymbol u=0
\end{equation}
by introducing a stream function $\psi(x,y,t)=yu(x,t)$, resulting in velocity vectors of the form $\boldsymbol{u}(x,y,t)=(\psi_y,-\psi_x)=(u,-yu_x)$, also known as ``stagnation point-form'' velocity fields. Alternatively, (\ref{eq:one})i) may be obtained in the study of axisymmetric flows without swirl through the cylindrical coordinate representation $u^r=-yu_x(x,t)$,\, $u^x=u(x,t)$ (\cite{Weyl1}, \cite{Escher1}). Moreover, differentiating (\ref{eq:one})i) in space yields 
\begin{equation}
\label{eq:vorticitymodel}
\omega_{t}+u\omega_{x}=\omega u_x,\,\,\,\,\,\,\,\,\,\,\,\,\,\,\,\omega=u_{xx},
\end{equation}
an equation derived in \cite{Gregorio1} as a 1D model for the well-known 3D vorticity equation
$$\boldsymbol{\omega}_t+(\boldsymbol{u}\cdot\nabla)\boldsymbol{\omega}=(\boldsymbol{\omega}\cdot\nabla)\boldsymbol{u},\,\,\,\,\,\,\,\,\,\,\,\,\,\boldsymbol{\omega}=\nabla\times\boldsymbol{u}.$$ 
Recently, (\ref{eq:one})i) was obtained, and analyzed for a particular class of data, within the context of a reduced 2D model for the 3D inviscid primitive equations of large scale oceanic and atmospheric dynamics (\cite{Titi1}); see also \cite{Cao1} and \cite{Cao2} for the case of the viscous primitive equations and \cite{Wong1} for a blowup result related to \eqref{eq:one}i) in the setting of the hydrostatic Euler equations. In \cite{Childress}, the authors showed the existence of blowup solutions to (\ref{eq:one}) for a particular choice of smooth initial data satisfying Dirichlet boundary conditions. Furthermore, via separation of variables they constructed antisymmetric blowup solutions from non-smooth initial data; see \cite{Okamoto2} and \cite{Titi1} for further generalizations of this approach and other applications. In the periodic setting, piecewise global weak solutions to (\ref{eq:one}) were constructed in \cite{Saxton1}, while in \cite{Aconstantin1} (see also \cite{Wunsch1}), the authors established blow-up criteria for odd initial data in terms of the time-dependent supremum or infimum of $u_x$. Moreover, in \cite{Okamoto1} (see also \cite{Okamoto2}), the authors showed that boundedness of $u_{xx}$ in the $L^2$ norm leads to global solutions, a result which closely resembles the classical Beale-Kato-Majda \cite{beale} blow-up criterion for (\ref{eq:2deuler}). Lastly, in \cite{Sarria1} we proved global existence in time of solutions of (\ref{eq:one}) for a particular class of smooth, periodic initial data via a direct approach that involved the derivation of representation formulae for solutions to the problem. 

In this paper, for smooth initial conditions, we present new regularity criteria for solutions of (\ref{eq:one}) in terms of their initial concavity profile. Our main results are summarized in Theorems \ref{thm:blow} and \ref{thm:global} in \S\ref{sec:blow}. Briefly, we prove that $u_x$ blows up in finite time as long as $u_0$ satisfies (\ref{eq:dbc}) and $u_0''(\overline\alpha_i)\neq0$ for $\overline\alpha_i$, $1\leq i\leq n$, denoting the finite number of locations in $[0,1]$ where $u_0'$ attains its greatest value\footnote[1]{The Reader may refer to Remark \ref{infinite} for a brief discussion on one case where there are infinitely many $\overline\alpha_i\in[0,1]$.}. In contrast, if the smooth initial data is periodic, then solutions are shown to remain smooth for all time. More particularly, in the latter case we discuss how the order $k\geq1$ of the inflection point $\overline\alpha_i$ (see definition \ref{def:order}) determines the asymptotic behaviour of global solutions as $t\to+\infty$. Lastly, we briefly examine the behaviour of solutions arising from initial data $u_0$ that is, at least, $C^1[0,1]\,\, a.e.$ We remark that in this article, ``blow-up'' will refer to $u_x$ diverging in the $L^{\infty}[0,1]$ norm.

The outline of the paper is as follows. In \S\ref{sec:solution} we derive a representation formula for $u_x$ along Lagrangian paths. Using this general solution, regularity is then studied in \S\ref{sec:blow}, while specific examples are deferred to \S\ref{sec:examples}.\footnote[2]{We note that the formula derived in \S\ref{sec:solution} is a special case of the representation formulae established in \cite{Sarria1}, however, its derivation is presented here for the sake of completeness and convenience of the reader.}

\section{The Representation Formula}
\label{sec:solution}
We now give an outline for the derivation of a representation formula for $u_x$ along Lagrangian trajectories. For fixed $\alpha\in[0,1]$, define the flow of $u$, which we will denote by $\gamma$, via the IVP
\begin{equation}
\label{eq:charac}
\dot\gamma(\alpha,t)=u(\gamma(\alpha,t),t),\,\,\,\,\,\,\,\,\,\,\,\,\,\,\,\gamma(\alpha,0)=\alpha
\end{equation}
where $\cdot\equiv\frac{d}{dt}$. Since (\ref{eq:charac}) implies
\begin{equation}
\label{eq:jacid}
\begin{split}
\dot\gamma_{\alpha}=(u_x\circ\gamma)\cdot\gamma_{\alpha},
\end{split}
\end{equation}
we use (\ref{eq:one})i) and (\ref{eq:jacid}) to obtain
\begin{equation}
\label{eq:chain}
\begin{split}
\ddot\gamma_\alpha=
2\left((\gamma^{-1}_{\alpha}\cdot\dot\gamma_{\alpha})^2-\int_0^1{u_x^2\,dx}\right)\cdot\gamma_\alpha\,.
\end{split}
\end{equation}
Setting $I(t)=-2\int_0^1{u_x^2\,dx}$, then
\begin{equation}
\label{eq:jacid2}
\begin{split}
I(t)=\frac{\ddot\gamma_{\alpha}\cdot\gamma_{\alpha}-2\dot\gamma_{\alpha}^{\,2}}{\gamma_{\alpha}^{\,2}}=-\gamma_{\alpha}\cdot\left(\gamma_{\alpha}^{-1}\right)^{\ddot{}},
\end{split}
\end{equation}
and so
\begin{equation}
\label{eq:nonhomo2}
\begin{split}
\ddot\omega(\alpha,t)+I(t)\omega(\alpha,t)=0
\end{split}
\end{equation}
for
\begin{equation}
\label{eq:lin}
\begin{split}
\omega(\alpha,t)=\gamma_{\alpha}(\alpha,t)^{-1}.
\end{split}
\end{equation}
Let $\phi_1(t)$ and $\phi_2(t)$ be two linearly independent solutions to (\ref{eq:nonhomo2}) such that $\phi_1(0)=\dot{\phi}_2(0)=1$, $\dot{\phi}_1(0)=\phi_2(0)=0$. Then by Abel's formula,  $\text{W}(\phi_1,\phi_2)(t)\equiv1,\, t\geq 0$, where W$(g,h)$ denotes the wronskian of $g$ and $h.$ We look for solutions of (\ref{eq:nonhomo2}), satisfying appropriate initial data, of the form
\begin{equation}
\label{eq:unonhomosol2}
\begin{split}
\omega(\alpha,t)=c_1(\alpha)\phi_1(t)+c_2(\alpha)\phi_2(t)
\end{split}
\end{equation}
where $\phi_2(t)=\phi_1(t)\int_0^t{\phi_1^{-2}(s)\,ds}$, by reduction of order. Since $\dot{\omega}=-\gamma_\alpha^{-2}\dot{\gamma_\alpha}$ and $\gamma_\alpha(\alpha,0)=1,$ then $\omega(\alpha, 0)=1$ and $\dot{\omega}(\alpha, 0)=-u'_0(\alpha)$, from which $c_1(\alpha)$ and $c_2(\alpha)$ are obtained. Combining these results give
\begin{equation}
\label{eq:compat}
\begin{split}
\omega(\alpha,t)=\phi_1(t)\left(1-\eta(t)u_0'(\alpha)\right),\,\,\,\,\,\,\,\,\,\,\,\,\eta(t)=\int_0^t\frac{ds}{\phi_1^2(s)}.
\end{split}
\end{equation}
Now, (\ref{eq:lin}) and (\ref{eq:compat}) imply that
\begin{equation}
\label{eq:jaco}
\begin{split}
\gamma_{\alpha}(\alpha,t)=\left(\phi_1(t)\mathcal{J}(\alpha,t)\right)^{-1}, 
\end{split}
\end{equation}
where 
\begin{equation}
\label{eq:J}
\begin{split}
\mathcal{J}(\alpha,t)=1-\eta(t)u_0^\prime(\alpha),\,\,\,\,\,\,\,\,\,\,\,\,\,\,\,\,\mathcal{J}(\alpha,0)=1.
\end{split}
\end{equation}
However, uniqueness of solution to (\ref{eq:charac}) requires that, for as long as $u$ is defined,
\begin{equation}
\label{eq:percha}
\begin{split}
\gamma(\alpha+1,t)-\gamma(\alpha,t)\equiv1
\end{split}
\end{equation}
for periodic solutions, or 
\begin{equation}
\label{eq:percha2}
\begin{split}
\gamma(0,t)\equiv0,\,\,\,\,\,\,\,\gamma(1,t)\equiv1
\end{split}
\end{equation}
for Dirichlet boundary conditions. Either way, we have that\, $\int_0^1{\gamma_{\alpha}\,d\alpha}\equiv1$,\, so that spatially integrating (\ref{eq:jaco}) yields
$$\phi_1(t)=\int_0^1{\mathcal{J}(\alpha,t)^{-1}d\alpha}.$$
Consequently, by setting 
\begin{equation}
\label{eq:def}
\begin{split}
\mathcal{K}_i(\alpha, t)=\frac{1}{\mathcal{J}(\alpha,t)^{i+1}},\qquad\,\,\,\,\,\,\,\,\,\,\bar{\mathcal{K}}_i(t)=\int_0^1{\mathcal{K}_i(\alpha, t)\,d\alpha},
\end{split}
\end{equation}
for $i=0,1,...,n$, we can write $\gamma_\alpha$ in the form
\begin{equation}
\label{eq:sum}
\gamma_\alpha={\mathcal K}_0/{\bar{\mathcal K}}_0.
\end{equation}
Then using (\ref{eq:jacid}) and (\ref{eq:sum}) we obtain, after simplification, 
\begin{equation}
\label{eq:mainsolu}
\begin{split}
u_x(\gamma(\alpha,t),t)=(\ln({\mathcal{K}_0/\bar{\mathcal K}_0}))^{{}^.}=\frac{1}{\eta(t)\bar{\mathcal K}_0(t)^{2}}
\left(\frac{1}{\mathcal{J}(\alpha, t)}-\frac{\bar{\mathcal{K}}_1(t)}{\bar{\mathcal K}_0(t)}
\right).
\end{split}
\end{equation}
Moreover, differentiating (\ref{eq:compat})ii) gives
\begin{equation}
\label{eq:etaivp}
\begin{split}
\dot{\eta}(t)=\bar{\mathcal{K}}_0(t)^{-2},\,\,\,\,\,\,\,\,\,\,\,\eta(0)=0,
\end{split}
\end{equation}   
from which it follows that the existence of an eventual finite blow-up time $t_*>0$ will depend, in part, upon convergence of the integral
\begin{equation}
\label{eq:assympt}
\begin{split}
t(\eta)=\int_0^{\eta}{\left(\int_0^1{\frac{d\alpha}{1-\mu u_0^\prime(\alpha)}}\right)^{2}\,d\mu}
\end{split}
\end{equation}
as $\eta\uparrow\eta_*$ for $\eta_*>0$ to be defined. 
Finally, assuming sufficient smoothness, we may use (\ref{eq:sum}) and (\ref{eq:mainsolu}) to obtain 
\begin{equation}
\label{eq:preserv1}
\begin{split}
u_{xx}(\gamma(\alpha,t),t)=u_0^{\prime\prime}(\alpha)\cdot\gamma_{\alpha}(\alpha,t)=\frac{u_0^{\prime\prime}(\alpha)}{\mathcal{J}(\alpha,t)}\left(\int_0^1{\frac{d\alpha}{\mathcal{J}(\alpha,t)}}\right)^{-1}.
\end{split}
\end{equation}
Equation  (\ref{eq:preserv1}) implies that as long as a solution exists it will maintain its initial concavity profile. 

The reader may refer to \cite{Sarria1} for details on the above and formulae for $u(\gamma(\alpha,t),t)$. 
\begin{remark}
The representation formula (\ref{eq:mainsolu}) is a 1D analogue of a solution, derived by Constantin (\cite{Constantin1}), of the vertical component equation of the 3D incompressible Euler equations subject to an infinite energy, periodic class of solutions.
\end{remark}

\section{Global Estimates and Blow-up}
\label{sec:blow}
In this section we study the evolution of (\ref{eq:mainsolu}) from smooth initial data $u_0$. First, we introduce some terminology. For $\gamma$ as defined in (\ref{eq:charac}), set 
\begin{equation}
\label{eq:maxmin0}
\begin{split}
M(t)\equiv\sup_{\alpha\in[0,1]}u_x(\gamma(\alpha,t),t),\,\,\,\,\,\,\,\,\,\,\,\,M(0)=M_0,
\end{split}
\end{equation}
and
\begin{equation}
\label{eq:defeta*}
\begin{split}
\eta_*=\frac{1}{M_0}
\end{split}
\end{equation}
where $M_0>0$ denotes the greatest value attained by $u_0'$ at a finite number of locations $\overline\alpha_i\in[0,1]$, $1\leq i\leq n$, for some $n\in\mathbb{N}$.\footnote[3]{In \S\ref{sec:blow}, we remark on a case where $M_0$ is achieved at an infinite number of points.} Then (\ref{eq:mainsolu}) implies that
\begin{equation}
\label{eq:max}
\begin{split}
M(t)=u_x(\gamma(\overline\alpha_i,t),t)
\end{split}
\end{equation}
for all $1\leq i\leq n$ and for as long as solutions are defined\footnote[4]{A result similar to (\ref{eq:max}) follows for $m(t)\equiv\inf_{\alpha}u_x(\gamma,t)$ if we let $\underline\alpha_j$, $1\leq j\leq m$, denote the finitely many points where $u_0'$ attains its least value.}. Moreover, as $\eta\uparrow \eta_*$, the space-dependent term in (\ref{eq:mainsolu}) will diverge for certain choices of $\alpha$ and not at all for others. Specifically, $\mathcal{J}(\alpha,t)^{-1}$ will blowup  earliest as $\eta\uparrow \eta_*$ at $\alpha=\overline\alpha_i$,
\begin{equation*}
\begin{split}
\mathcal{J}(\overline\alpha_i,t)^{-1}=(1-\eta(t)M_0)^{-1}\to+\infty\,\,\,\,\,\,\,\text{as}\,\,\,\,\,\,\,\eta\uparrow\eta_*.
\end{split}
\end{equation*}
However, blow-up of $M(t)$ in (\ref{eq:max}) does not necessarily follow from this; we will need to estimate the behaviour, as $\eta\uparrow\eta_*$, of the time-dependent integrals $\bar{\mathcal{K}}_0(t)$ and $\bar{\mathcal{K}}_1(t)$. To do this, we use a Taylor expansion of $u_0'$ about $\overline\alpha_i$. Suppose 
\begin{equation}
\label{eq:expnew0}
u_0^\prime(\alpha)\sim M_0+C_1\left|\alpha-\overline\alpha_i\right|^q
\end{equation}
holds for all $1\leq i\leq n$, some $C_1\in\mathbb{R}^-$, $0\leq\left|\alpha-\overline\alpha_i\right|\leq r\leq1$ and either $q=1$, or $q=k+1$, $k\geq1$ odd. Note that the choice of $q$ in (\ref{eq:expnew0}) depends on the vanishing, or not, of $u_0''$ at $\overline\alpha_i\in[0,1]$, as well as on the corresponding set of boundary conditions. Due to the smoothness of $u_0$, there are two main possibilities to consider. First, notice that no smooth function $u_0'$ can attain its greatest value $M_0>0$ somewhere in the interior $(0,1)$ while satisfying (\ref{eq:expnew0}) for $q=1$. Indeed, suppose $q=1$ and assume there exists, say, $\overline\alpha_1\in(0,1)$. Then (\ref{eq:expnew0}) implies that $u_0''$ has jump discontinuities of finite magnitude at $\overline\alpha_1$. From this we conclude that if the data is smooth and $u_0'$ satisfies (\ref{eq:expnew0}) for $q=1$, then for all $1\leq i\leq n$, $\overline\alpha_i$ must be a boundary point. An example in the Dirichlet setting would be $u_0(\alpha)=\alpha(1-\alpha)$, which has $\overline\alpha_1=0$ and $M_0=1$.\footnote[5]{Initial data similar to this was used in \cite{Childress} to construct a blowup solution.} In fact, of the boundary conditions (\ref{eq:pbc}) and (\ref{eq:dbc}), only the latter allows for smooth data satisfying such conditions. Indeed, suppose a periodic function $u_0$ satisfies (\ref{eq:expnew0}) for $q=1$ and $M_0=u_0'(0)=u_0'(1)>u_0'(\alpha)$ for all $\alpha\in(0,1)$. Then by periodicity of $u_0$ and the definition of $\overline\alpha_i$, we have that $0>u_0''(0)=u_0''(1)$. But using (\ref{eq:expnew0}) with $q=1$ gives 
$$0>u_0''(1)=\lim_{h\to0^{^{-}}}{\frac{u_0'(1+h)-M_0}{h}}\sim\lim_{h\to0^{^{-}}}{\frac{(M_0+\left|C_1\right|h)-M_0}{h}}=\left|C_1\right|,$$
a contradiction. We conclude that if the data is periodic and satisfies (\ref{eq:expnew0}) for $q=1$, then $\overline\alpha_i\in(0,1)$ and, thus, $u_0$ cannot be smooth due to our previous argument. From the above discussion, it follows that regularity of (\ref{eq:mainsolu}) with smooth initial data can be examined by considering two different cases, each characterized by the vanishing, or not, of $u_0''$ at $\overline\alpha_i$. For simplicity, we will assume that the local profile of $u_0''$ near all $\overline\alpha_i$ is the same. However, how to proceed if this is not the case will be clear from the subsequent arguments. Lastly, the structure of \eqref{eq:preserv1} implies that in the case of blowup in $u_x$, divergence in third or higher order spatial derivatives of $u$ can only occur as $\eta$ approaches $\eta_*$. This is easily verified by differentiating \eqref{eq:preserv1} with respect to $\alpha$ and using \eqref{eq:sum}.

In Theorem \ref{thm:blow} below, we show that finite-time blowup in $u_x$ from smooth initial data can occur under Dirichlet boundary conditions as long as $u_0''(\overline\alpha_i)\neq0$ for all $i$. In contrast, in Theorem \ref{thm:global} we prove that solutions subject to (\ref{eq:pbc}), and/or (\ref{eq:dbc}), will persist globally in time if $u_0''$ vanishes at $\overline\alpha_i$ for at least one $i$. More particularly, if $u_0''(\overline\alpha_i)=0$ for all $i$, the latter scenario will imply that
\begin{equation}
\label{eq:expnew0order}
u_0'(\alpha)\sim M_0+C_1\left|\alpha-\overline\alpha_i\,\right|^{k+1}
\end{equation}
for all $0\leq\left|\alpha-\overline\alpha_i\right|\leq r$, with $r>0$ defined by\, $r\equiv\min_{1\leq i\leq n}\{r_i\},$
and where each $r_i>0$ corresponds to at least one $\overline\alpha_i$. In \eqref{eq:expnew0order}, $k\geq1$ is odd and fixed, and represents the order of $\overline\alpha_i$ (see definition below)\footnote[6]{Notice that $k\geq1$ must be odd due to $u_0'$ being even in a neighbourhood of $\overline\alpha_i$.}, while
\begin{equation}
\label{eq:c1}
C_1=\frac{u_0^{^{(k+2)}}(\overline\alpha_i)}{(k+1)!}<0.
\end{equation}
Even though $C_1$ may vary from one $\overline\alpha_i$ to the next, what will matter to us while deriving upcoming estimates is that the negativity of these constants is independent of the particular location $\overline\alpha_i$. Consequently, there will be no need for us to differentiate among these constants neither qualitatively nor notationally. Lastly, we point out that for a solution to be global in time, it will suffice that $u_0''$ be zero at, at least, one $\overline\alpha_i$. Essentially, what happens is that if $u_0''(\overline\alpha_1)=0$ for some $\overline\alpha_1\in[0,1]$, but there is also $\overline\alpha_2\in[0,1]$ such that $u_0''(\overline\alpha_2)\neq0$, then it will become clear in the next section that the local profile of $u_0''$ near $\overline\alpha_1$ dominates and determines the behavior of the integral terms. This is precisely why blowup requires  $u_0''(\overline\alpha_i)\neq0$ for all $i$.

\begin{definition}
\label{def:order}
Suppose a smooth function $f(x)$ satisfies $f(x_0)=0$ for $f$ not identically zero. We say $f$ has a zero of order $k\in\mathbb{Z}^+$ at $x=x_0$ if 
$$f(x_0)=f'(x_0)=...=f^{(k-1)}(x_0)=0,\,\,\,\,\,\,\,\,\,\,\,\,f^{(k)}(x_0)\neq0.$$
\end{definition}
We begin by establishing Theorem \ref{thm:blow} below, which provides new criteria for the existence of finite-time blow-up solutions of (\ref{eq:one}).

\begin{theorem}
\label{thm:blow}
Consider the initial value problem (\ref{eq:one}) for smooth initial data $u_0(x)$ satisfying (\ref{eq:dbc}) and such that $u_0'(x)$ attains its greatest value $M_0>0$ at a finite number of locations $\overline\alpha_i\in[0,1]$, $1\leq i\leq n$. If\, $u_0''(\overline\alpha_i)\neq0$ for all $1\leq i\leq n$, then there exists a finite $t_*>0$ such that the smooth solution $u_x$ of (\ref{eq:one}) blows up as $t\uparrow t_*$. More particularly, the maximum $M(t)=u_x(\overline\alpha_i,t)\to+\infty$ as $t\uparrow t_*$, while, for $\alpha\neq\overline\alpha_i$, $u_x(\gamma(\alpha,t),t)\to-\infty$.
\end{theorem}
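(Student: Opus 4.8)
The plan is to run the whole argument in the ``Lagrangian time'' variable $\eta$, using the representation formula (\ref{eq:mainsolu}) together with (\ref{eq:etaivp}). Since $\dot\eta=\bar{\mathcal{K}}_0(t)^{-2}>0$, the map $t\mapsto\eta$ is strictly increasing with inverse $t=t(\eta)$ given by (\ref{eq:assympt}); and the requirement $\mathcal{J}(\overline\alpha_i,t)=1-\eta M_0>0$ forces $\eta<\eta_*=1/M_0$ for as long as the solution exists, so the relevant limit throughout is $\eta\uparrow\eta_*$, equivalently $a:=1-\eta M_0=M_0(\eta_*-\eta)\downarrow0$. Two things then have to be shown: (i) the blow-up time stays finite, $t_*:=\lim_{\eta\uparrow\eta_*}t(\eta)=\int_0^{\eta_*}\bar{\mathcal{K}}_0(\mu)^2\,d\mu<\infty$; and (ii) feeding sharp asymptotics for $\bar{\mathcal{K}}_0$ and $\bar{\mathcal{K}}_1$ as $\eta\uparrow\eta_*$ into (\ref{eq:max}) and (\ref{eq:mainsolu}) yields the claimed rates at $\overline\alpha_i$ and away from it. That the solution stays smooth on $[0,t_*)$ is automatic: by the remark following (\ref{eq:preserv1}), no spatial derivative of $u$ diverges while $\eta<\eta_*$.

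The key step is the asymptotic analysis of $\bar{\mathcal{K}}_0(\mu)=\int_0^1\frac{d\alpha}{1-\mu u_0'(\alpha)}$ and $\bar{\mathcal{K}}_1(\mu)=\int_0^1\frac{d\alpha}{(1-\mu u_0'(\alpha))^2}$ as $\mu\uparrow\eta_*$. Because $u_0'$ is continuous and attains $M_0$ only at the $\overline\alpha_i$, for every $\delta>0$ one has $\sup\{u_0'(\alpha):|\alpha-\overline\alpha_i|\ge\delta\text{ for all }i\}<M_0$, so $1-\mu u_0'(\alpha)$ is bounded below by a positive constant away from the $\overline\alpha_i$ and the whole singular behaviour is localized near those points. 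Under the hypothesis $u_0''(\overline\alpha_i)\neq0$, the discussion preceding the theorem shows that each $\overline\alpha_i$ must be a boundary point and that (\ref{eq:expnew0}) holds with $q=1$, i.e. $u_0'(\alpha)=M_0+C_1|\alpha-\overline\alpha_i|+o(|\alpha-\overline\alpha_i|)$ with $C_1=-|u_0''(\overline\alpha_i)|<0$. Sandwiching $u_0'(\alpha)$ between $M_0+(C_1\pm\epsilon)|\alpha-\overline\alpha_i|$ on a small one-sided neighbourhood of each $\overline\alpha_i$, evaluating the elementary integrals $\int_0^r\frac{d\beta}{a+b\beta}$ and $\int_0^r\frac{d\beta}{(a+b\beta)^2}$ (with $b\to\eta_*|u_0''(\overline\alpha_i)|>0$), then letting $\epsilon\downarrow0$ and summing over $i$, I get, for a positive constant $B$,
\begin{equation*}
\bar{\mathcal{K}}_0(\mu)\sim B\,|\ln a|,\qquad\bar{\mathcal{K}}_1(\mu)\sim\frac{B}{a}\qquad\text{as}\ \mu\uparrow\eta_*,
\end{equation*}
the Taylor remainder and the contributions away from the $\overline\alpha_i$ being of strictly lower order.

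From here the three conclusions drop out. First, $\bar{\mathcal{K}}_0(\mu)^2\sim B^2\ln^2 a$ is integrable near $\mu=\eta_*$ (substitute $s=1-\mu M_0$), so $t_*<\infty$, and since $t(\eta)$ is strictly increasing one has $\eta\uparrow\eta_*$ exactly as $t\uparrow t_*$. Next, by (\ref{eq:max}) and (\ref{eq:mainsolu}),
\begin{equation*}
M(t)=\frac{1}{\eta\,\bar{\mathcal{K}}_0^2}\left(\frac{1}{a}-\frac{\bar{\mathcal{K}}_1}{\bar{\mathcal{K}}_0}\right),
\end{equation*}
where $\bar{\mathcal{K}}_1/\bar{\mathcal{K}}_0=O\!\left(\frac{1}{a\,|\ln a|}\right)=o(1/a)$, so the bracket is $\sim1/a>0$, while $\eta\to\eta_*>0$ and $\bar{\mathcal{K}}_0^2\sim B^2\ln^2 a$; hence $M(t)\sim\frac{M_0}{B^2}\cdot\frac{1}{a\,\ln^2 a}\to+\infty$, because $1/a$ dominates $\ln^2 a$ as $a\downarrow0$. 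Finally, fixing $\alpha\neq\overline\alpha_i$, the term $\mathcal{J}(\alpha,t)^{-1}=(1-\mu u_0'(\alpha))^{-1}$ converges to the finite number $(1-\eta_*u_0'(\alpha))^{-1}$, hence is negligible against $\bar{\mathcal{K}}_1/\bar{\mathcal{K}}_0\to+\infty$, and (\ref{eq:mainsolu}) gives $u_x(\gamma(\alpha,t),t)\sim-\frac{\bar{\mathcal{K}}_1}{\eta_*\,\bar{\mathcal{K}}_0^3}\sim-\frac{M_0}{B^2}\cdot\frac{1}{a\,|\ln a|^3}\to-\infty$.

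I expect the main obstacle to be making the integral asymptotics rigorous: one must control the $o(|\alpha-\overline\alpha_i|)$ Taylor remainder uniformly in $\mu$ near $\eta_*$, confirm that the singular mass of $\bar{\mathcal{K}}_0$ and $\bar{\mathcal{K}}_1$ genuinely concentrates at the $\overline\alpha_i$, and carry enough information about the constants that the logarithm-versus-power comparisons survive. The uniform positive lower bound on $1-\mu u_0'$ away from the $\overline\alpha_i$, together with the $\epsilon$-sandwiching of $u_0'$ near each $\overline\alpha_i$, should reduce everything to elementary one-variable estimates.
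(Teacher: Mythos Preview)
Your proposal is correct and follows essentially the same route as the paper's own proof: localize the singular contributions to $\bar{\mathcal{K}}_0$ and $\bar{\mathcal{K}}_1$ near the $\overline\alpha_i$ via the linear Taylor expansion (\ref{eq:expnew0}) with $q=1$, extract the $|\ln a|$ and $1/a$ asymptotics, and feed them into (\ref{eq:mainsolu}) and (\ref{eq:assympt}) to read off the blow-up of $M(t)$, the divergence to $-\infty$ elsewhere, and the finiteness of $t_*$. Your treatment is in fact slightly more explicit than the paper's about the $\epsilon$-sandwiching and the uniform lower bound on $1-\mu u_0'$ away from the $\overline\alpha_i$, and your observation that the \emph{same} leading constant $B$ governs both $\bar{\mathcal{K}}_0\sim B|\ln a|$ and $\bar{\mathcal{K}}_1\sim B/a$ is correct and matches the paper's constants once one substitutes $a=M_0(\eta_*-\eta)$.
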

\begin{proof}
From our previous discussion, smoothness of $u_0$ implies that $\overline\alpha_i\in\{0,1\}$. Without loss of generality, and to keep the presentation as simple as possible, we will assume that $u_0'$ attains its greatest value $M_0>0$ only at $\overline\alpha=0$ and $u_0''(0)\neq0$. Then via a Taylor expansion this implies that for $\alpha>0$ small, 
\begin{equation}
\label{eq:expnew0dbc}
u_0^\prime(\alpha)\sim M_0+C_1\alpha,\,\,\,\,\,\,\,\,\,\,\,\,\,\,C_1=u_0''(0)<0,
\end{equation}
and so for $\epsilon>0$ small, there is $0<r\leq1$ such that
\begin{equation}
\label{eq:againdiri}
\begin{split}
\epsilon+M_0-u_0'(\alpha)\sim\epsilon+\left|C_1\right|\alpha
\end{split}
\end{equation}
for $0\leq\alpha\leq r$. Consequently
\begin{equation}
\label{eq:app}
\begin{split}
\int_{0}^{r}{\frac{d\alpha}{\epsilon+M_0-u_0'(\alpha)}}\sim\int_{0}^{r}{\frac{d\alpha}{\epsilon+\left|C_1\right|\alpha}}=-\frac{1}{\left|C_1\right|}\ln\epsilon.
\end{split}
\end{equation}
Setting $\epsilon=\frac{1}{\eta}-M_0$ into (\ref{eq:app}) then implies that, for $\eta_*-\eta>0$ small,
\begin{equation}
\label{eq:intest1}
\begin{split}
\bar{\mathcal{K}}_0(t)\sim-\frac{M_0}{\left|C_1\right|}\ln(\eta_*-\eta).
\end{split}
\end{equation}
In a similar fashion, the second integral can be shown to diverge at a rate
\begin{equation}
\label{eq:intest2}
\begin{split}
\bar{\mathcal{K}}_1(t)\sim\frac{1}{\left|C_1\right|}(\eta_*-\eta)^{-1}.
\end{split}
\end{equation}
For $\alpha=\overline\alpha=0$, the above integral estimates, along with (\ref{eq:percha2})i) and (\ref{eq:max}), imply that the space-dependent term in (\ref{eq:mainsolu}) dominates and, as a result, the maximum $M(t)=u_x(0,t)$ satisfies
\begin{equation}
\label{eq:maxblow}
\begin{split}
M(t)\sim\left(\frac{C_1}{M_0}\right)^2\left(\frac{1}{(\eta_*-\eta)\ln^2(\eta_*-\eta)}\right)
\end{split}
\end{equation}
for $\eta_*-\eta>0$ small. Consequently
\begin{equation}
\label{eq:maxblow2}
\begin{split}
M(t)\to+\infty
\end{split}
\end{equation}
as $\eta\uparrow\eta_*$. In contrast, for $\alpha\neq0$ and $0\leq\eta\leq\eta_*$, the space-dependent term now remains finite and the second term dominates. This implies that
\begin{equation}
\label{eq:maxblow3}
\begin{split}
u_x(\gamma(\alpha,t),t)\sim\left(\frac{C_1}{M_0}\right)^2\left(\frac{1}{(\eta_*-\eta)\ln^3(\eta_*-\eta)}\right)\to-\infty
\end{split}
\end{equation}
as $\eta\uparrow\eta_*$. Lastly, the existence of a finite blowup time $t_*>0$ follows from using (\ref{eq:intest1}) on (\ref{eq:etaivp}), which yields 
\begin{equation}
\label{eq:time1}
\begin{split}
t_*-t\sim\left(\frac{M_0}{C_1}\right)^2\int_{\eta}^{\eta_*}{\ln^2(\eta_*-\mu)\,d\mu}.
\end{split}
\end{equation}
Then for $\eta_*-\eta>0$ small and $C=2(M_0/C_1)^2$, the above gives, after simplification, the asymptotic relation
\begin{equation}
\label{eq:timefin}
\begin{split}
t_*-t\sim C\,(\eta_*-\eta).
\end{split}
\end{equation}
\end{proof}
\begin{remark}
It will be clear from the estimates in the proof of the next Theorem that $u_0''(\overline\alpha_i)\neq0$ for all $i$ is indeed necessary for finite-time blowup from smooth initial conditions.
\end{remark}
Our next result examines global existence of solutions.

\begin{theorem}
\label{thm:global}
Consider the initial value problem (\ref{eq:one}) for smooth initial data $u_0(x)$ satisfying (\ref{eq:pbc}) (and/or (\ref{eq:dbc})) and such that $u_0'(x)$ attains its greatest value $M_0>0$ at a finite number of locations $\overline\alpha_i\in[0,1]$, $1\leq i\leq n$. If $u_0''$ vanishes at $\overline\alpha_i$ for at least one $i$, then solutions remain smooth for all time. In particular, a solution will stay smooth for all time if $u_0''$ has a zero of order $k\geq1$ (see definition \ref{def:order}) at every $\overline\alpha_i$\,, with $u_x$ converging to a non-trivial steady-state as $t\to+\infty$ if $k=1$, but vanishing when $k>1$. 
\end{theorem}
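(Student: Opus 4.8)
The plan is to decide the dichotomy ``global existence vs.\ finite-time blow-up'' by determining, through (\ref{eq:etaivp}) and (\ref{eq:assympt}), whether the auxiliary variable $\eta(t)$ reaches $\eta_*=1/M_0$ in finite or in infinite time, and then to read off the $t\to+\infty$ behaviour of $u_x$ directly from the representation formula (\ref{eq:mainsolu}). For the reduction, note that by (\ref{eq:etaivp}) one has $t=\int_0^{\eta}\bar{\mathcal K}_0(\mu)^2\,d\mu$, and that for every $\eta<\eta_*$, (\ref{eq:J}) and (\ref{eq:defeta*}) give $\mathcal J(\alpha,t)\geq 1-\eta M_0>0$, so that $\gamma_\alpha=\big(\mathcal J\,\bar{\mathcal K}_0\big)^{-1}>0$ by (\ref{eq:sum}), the flow stays a smooth diffeomorphism, and the right-hand sides of (\ref{eq:mainsolu}) and (\ref{eq:preserv1}), together with their $\alpha$-derivatives, remain finite and smooth. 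Hence the solution is smooth on any interval on which $\eta<\eta_*$, and global smoothness is equivalent to $t_*:=\int_0^{\eta_*}\bar{\mathcal K}_0^2\,d\mu=+\infty$.

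Next I would prove $t_*=+\infty$ under the hypothesis. Pick $\overline\alpha_{i_0}$ with $u_0''(\overline\alpha_{i_0})=0$; since $u_0$ is smooth and $M_0$ is attained at only finitely many points, $M_0-u_0'$ has a zero of some order $q\geq 2$ at $\overline\alpha_{i_0}$. Writing $\epsilon=\eta^{-1}-M_0\sim M_0^2(\eta_*-\eta)$ and $1-\eta u_0'(\alpha)=\eta\big(\epsilon+|C_1|\,|\alpha-\overline\alpha_{i_0}|^{q}+\cdots\big)$ near $\overline\alpha_{i_0}$, the substitution $\alpha-\overline\alpha_{i_0}=\epsilon^{1/q}v$ estimates the neighbourhood contribution and yields $\bar{\mathcal K}_0(\eta)\gtrsim(\eta_*-\eta)^{-(q-1)/q}$; any remaining maximiser with $q=1$ (necessarily a Dirichlet boundary point) contributes only a term of size $|\ln(\eta_*-\eta)|$, exactly as in the proof of Theorem \ref{thm:blow}, hence is negligible. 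Since $2(q-1)/q\geq1$ for $q\geq2$, the integral $\int_0^{\eta_*}\bar{\mathcal K}_0^2\,d\mu$ diverges, and the same holds a fortiori if $M_0-u_0'$ vanishes to infinite order at $\overline\alpha_{i_0}$. This gives global existence and smoothness.

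For the long-time behaviour I would specialise to (\ref{eq:expnew0order}): assume $u_0''$ has a zero of order $k\geq1$ at every $\overline\alpha_i$, so $q=k+1$ and $C_1<0$ is given by (\ref{eq:c1}). A Watson-type expansion of $\bar{\mathcal K}_0,\bar{\mathcal K}_1$ in (\ref{eq:def}) (the rescaling $\alpha-\overline\alpha_i=(\eta_*-\eta)^{1/(k+1)}v$, legitimate since $u_0'$ is even near each $\overline\alpha_i$) gives $\bar{\mathcal K}_0\sim A_0(\eta_*-\eta)^{-k/(k+1)}$ and $\bar{\mathcal K}_1\sim A_1(\eta_*-\eta)^{-(2k+1)/(k+1)}$ with $A_0,A_1>0$. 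Feeding $\bar{\mathcal K}_0$ into $\dot\eta=\bar{\mathcal K}_0^{-2}$ and integrating shows $\eta_*-\eta\sim c\,e^{-t/A_0^2}$ when $k=1$ and $\eta_*-\eta\sim c\,t^{-(k+1)/(k-1)}$ when $k>1$, so that $\eta\uparrow\eta_*$ exactly as $t\to+\infty$. Substituting back into (\ref{eq:mainsolu}), and using $\mathcal J(\overline\alpha_i,t)^{-1}=\big(M_0(\eta_*-\eta)\big)^{-1}$ at the maximisers while $\mathcal J(\alpha,t)^{-1}$ stays bounded for $\alpha\neq\overline\alpha_i$, the bracket in (\ref{eq:mainsolu}) is of order $(\eta_*-\eta)^{-1}$ and the prefactor $(\eta\bar{\mathcal K}_0^2)^{-1}$ of order $(\eta_*-\eta)^{2k/(k+1)}$, so both $M(t)=u_x(\gamma(\overline\alpha_i,t),t)$ and $u_x(\gamma(\alpha,t),t)$ for $\alpha\neq\overline\alpha_i$ behave like a constant times $(\eta_*-\eta)^{(k-1)/(k+1)}$. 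The exponent is $0$ exactly when $k=1$, yielding finite limits, and is positive when $k>1$, forcing $u_x\to0$ uniformly; combining the $k=1$ limit with the limiting form of $\gamma_\alpha$ from (\ref{eq:jaco}) identifies a non-trivial steady state of (\ref{eq:one}).

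The main obstacle is the $k=1$ case of the last step: there $M(t)$ is the difference of $\mathcal J(\overline\alpha_i,t)^{-1}$ and $\bar{\mathcal K}_1/\bar{\mathcal K}_0$, two quantities that both diverge like $(\eta_*-\eta)^{-1}$, so one must compute $A_0$ and $A_1$ precisely enough to be sure the leading singular parts do not annihilate and that the surviving constant is strictly positive --- which it must be, since $\int_0^1 u_x\,dx=0$ (from either boundary condition) forces $M(t)=\sup_\alpha u_x(\gamma(\cdot,t),t)>0$ for all $t$. The remaining technical work is routine but nontrivial: upgrading the heuristic ``$\sim$'' relations in the last two paragraphs to genuine asymptotics with remainders controlled uniformly as $\eta\uparrow\eta_*$, and handling the infinite-order-of-vanishing case in the lower bound for $\bar{\mathcal K}_0$.
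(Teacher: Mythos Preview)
Your proposal is correct and follows essentially the same route as the paper: both arguments estimate $\bar{\mathcal K}_0$ and $\bar{\mathcal K}_1$ by Taylor-expanding $u_0'$ near the maximisers and rescaling, deduce $t_*=+\infty$ from $\int_0^{\eta_*}\bar{\mathcal K}_0^2\,d\mu=+\infty$, and then read off the $(\eta_*-\eta)^{(k-1)/(k+1)}$ asymptotics of $u_x$ from (\ref{eq:mainsolu}). The paper resolves what you flag as the ``main obstacle'' by carrying out the rescaling via a trigonometric substitution and evaluating the resulting Beta integrals explicitly in terms of Gamma functions, obtaining $A_1/A_0=\frac{k}{M_0(k+1)}$; since $\mathcal J(\overline\alpha_i,t)^{-1}\sim\frac{1}{M_0(\eta_*-\eta)}$, the bracket in (\ref{eq:mainsolu}) at $\overline\alpha_i$ is $\sim\frac{1}{M_0(k+1)}(\eta_*-\eta)^{-1}$, so the leading singular parts never fully annihilate for any $k\geq1$, and the surviving constant is strictly positive as you anticipated.
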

\begin{proof}
First suppose $u_0''$ has a zero of order $k\geq1$ at all $\overline\alpha_i$. Then smoothness of $u_0$ implies that in a neighbourhood of those $\overline\alpha_i\in(0,1)$, $u_0'$ satisfies (\ref{eq:expnew0order}) for fixed $k\geq1$ odd. Similarly for the case where there are $\overline\alpha_i\in\{0,1\}$ due to our assumption $u_0''(\overline\alpha_i)=0$ and definition \ref{def:order}. In order to simplify subsequent computations, assume that $u_0'$ attains its maximum $M_0>0$ only at one location $\overline\alpha\in(0,1)$.\footnote[7]{A slightly modification of the argument presented below will suffice to accommodate the case of finitely many $\overline\alpha_i\in[0,1]$.} Then from (\ref{eq:expnew0order}), there is $r>0$ and fixed $k\geq1$ odd, such that
\begin{equation}
\label{eq:againdiri2}
\begin{split}
\epsilon+M_0-u_0'(\alpha)\sim\epsilon-C_1\left|\alpha-\overline\alpha\,\right|^{k+1}
\end{split}
\end{equation}
for $\epsilon>0$ small, $0\leq\left|\alpha-\overline\alpha\right|\leq r$ and constant $C_1<0$ as in (\ref{eq:c1}). Letting $b\in\{1,2\}$ and using the above we have that
\begin{equation*}
\begin{split}
\int_{\overline\alpha-r}^{\overline\alpha+r}{\frac{d\alpha}{(\epsilon+M_0-u_0'(\alpha))^b}}&\sim\int_{\overline\alpha-r}^{\overline\alpha+r}{\frac{d\alpha}{(\epsilon-C_1\left|\alpha-\overline\alpha\,\right|^{k+1})^b}}
\\
&=\epsilon^{-b}\left\{\int_{\overline\alpha-r}^{\overline\alpha}{\left(1+\frac{\left|C_1\right|}{\epsilon}\left(\overline\alpha-\alpha\right)^{1+k}\right)^{-b}d\alpha}+\int_{\overline\alpha}^{\overline\alpha+r}{\left(1+\frac{\left|C_1\right|}{\epsilon}\left(\alpha-\overline\alpha\right)^{1+k}\right)^{-b}d\alpha}\right\}.
\end{split}
\end{equation*}
Making the change of variables
$$\sqrt{\frac{\left|C_1\right|}{\epsilon}}\,(\overline\alpha-\alpha)^{\frac{k+1}{2}}=\tan\theta,\,\,\,\,\,\,\,\,\,\,\,\,\,\,\sqrt{\frac{\left|C_1\right|}{\epsilon}}\,(\alpha-\overline\alpha)^{\frac{k+1}{2}}=\tan\theta$$
in the first and respectively second integrals inside the braces, yields
\begin{equation}
\label{eq:general2}
\begin{split}
\int_{\overline\alpha-r}^{\overline\alpha+r}{\frac{d\alpha}{(\epsilon+M_0-u_0'(\alpha))^b}}\sim\frac{4\, \epsilon^{\frac{1}{1+k}-b}}{(1+k)\left|C_1\right|^{\frac{1}{1+k}}}\int_0^{\frac{\pi}{2}}{\frac{(\cos\theta)^{^{2b-\frac{k+3}{k+1}}}}{(\sin\theta)^{^{\frac{k-1}{k+1}}}}d\theta}
\end{split}
\end{equation}
for $\epsilon>0$ small and where $\frac{1}{1+k}-b<0$ for, particularly, $b\in\{1,2\}$ and $k\geq1$ odd. Then setting $\epsilon=\frac{1}{\eta}-M_0$ in (\ref{eq:general2}) gives
\begin{equation}
\label{eq:general3}
\begin{split}
\int_{0}^{1}{\frac{d\alpha}{\mathcal{J}(\alpha,t)^b}}\sim C(\eta_*-\eta)^{\frac{1}{1+k}-b}
\end{split}
\end{equation}
for $\eta_*-\eta>0$ small, $\eta_*=\frac{1}{M_0}$ and 
\begin{equation}
\label{eq:generalcst0}
\begin{split}
C=\frac{4\,M_0^{\frac{2}{1+k}-b}}{(1+k)\left|C_1\right|^{\frac{1}{1+k}}}\int_0^{\frac{\pi}{2}}{\frac{(\cos\theta)^{^{2b-\frac{k+3}{k+1}}}}{(\sin\theta)^{^{\frac{k-1}{k+1}}}}d\theta}.
\end{split}
\end{equation}
Note that (\ref{eq:generalcst0}) above is finite and positive. Indeed, since the gamma function satisfies (see for instance \cite{Gamelin1}) 
\begin{equation}
\label{eq:gammarel}
\begin{split}
\int_0^1{t^{p-1}(1-t)^{s-1}dt}=\frac{\Gamma(p)\Gamma(s)}{\Gamma(p+s)},\,\,\,\,\,\,\,\,\,\,\,\,\,\,\,\,\,\,\Gamma(1+y)=y\Gamma(y)
\end{split}
\end{equation}
for $p,s,y>0$, we can let $t=\sin^2\theta$, $p=\frac{1}{1+k}$ and $s=b-\frac{1}{1+k}$ into (\ref{eq:gammarel})i) to rewrite (\ref{eq:generalcst0}) as
\begin{equation}
\label{eq:generalcst}
\begin{split}
C=\frac{2\,M_0^{\frac{2}{1+k}-b}}{(1+k)\left|C_1\right|^{\frac{1}{1+k}}}\Gamma\left(\frac{1}{1+k}\right)\Gamma\left(b-\frac{1}{1+k}\right),
\end{split}
\end{equation}
which is clearly positive and well-defined. Now, using (\ref{eq:general3}) and (\ref{eq:generalcst}), we obtain the blow-up rates
\begin{equation}
\label{eq:k0}
\begin{split}
\bar{\mathcal{K}}_0(t)\sim C_2(\eta_*-\eta)^{-\frac{k}{1+k}}
\end{split}
\end{equation}
for
\begin{equation}
\label{eq:c2}
\begin{split}
C_2=\left(\frac{2}{1+k}\right)\Gamma\left(\frac{1}{1+k}\right)\Gamma\left(\frac{k}{1+k}\right)\left(\frac{M_0^{1-k}}{\left|C_1\right|}\right)^{\frac{1}{1+k}},
\end{split}
\end{equation}
and 
\begin{equation}
\label{eq:k1}
\begin{split}
\bar{\mathcal{K}}_1(t)\sim C_3(\eta_*-\eta)^{-\frac{1+2k}{1+k}}
\end{split}
\end{equation}
with
\begin{equation}
\label{eq:c3}
\begin{split}
C_3=\left(\frac{2}{1+k}\right)\Gamma\left(\frac{1}{1+k}\right)\Gamma\left(\frac{1+2k}{1+k}\right)\left(\frac{M_0^{-2k}}{\left|C_1\right|}\right)^{\frac{1}{1+k}}.
\end{split}
\end{equation}
Before studying the behaviour of (\ref{eq:mainsolu}) via the above estimates, it is important to note that (\ref{eq:gammarel})ii) implies  
\begin{equation}
\label{eq:c3/c2}
\begin{split}
\frac{C_3}{C_2}=\frac{1}{M_0}\left(\frac{k}{1+k}\right).
\end{split}
\end{equation}
Letting $\alpha=\overline\alpha$ in (\ref{eq:mainsolu}) and using (\ref{eq:max}), along with (\ref{eq:k0})-(\ref{eq:c3/c2}), we find that the maximum $M(t)=u_x(\gamma(\overline\alpha,t),t)$ satisfies
$$M(t)\sim\left(\frac{C_2^{^{-2}}}{1+k}\right)(\eta_*-\eta)^{\frac{k-1}{k+1}}$$
for $\eta_*-\eta>0$ small. Therefore
\begin{equation}
\label{eq:max0}
M(t)\to
\begin{cases}
\frac{\left|u_0'''(\overline\alpha)\right|}{(2\pi)^2},\,\,\,\,\,\,\,\,\,\,\,\,\,&k=1,
\\
0^+,\,\,\,\,\,\,\,\,\,\,\,\,\,\,\,\,&k=3,5,7,...
\end{cases}
\end{equation}
as $\eta\uparrow\eta_*$. For $\alpha\neq\overline\alpha$, the space-dependent term in (\ref{eq:mainsolu}) remains finite for $0\leq\eta\leq\eta_*$ and, as a result, an argument similar to the one above leads to 
$$u_x(\gamma(\alpha,t),t)\sim-\left(\frac{k}{(1+k)C_2^{^{2}}}\right)(\eta_*-\eta)^{\frac{k-1}{k+1}}$$
for $\eta_*-\eta>0$ small. Consequently, for $\alpha\neq\overline\alpha$, 
\begin{equation}
\label{eq:ux0}
u_x(\gamma(\alpha,t),t)\to
\begin{cases}
-\frac{\left|u_0'''(\overline\alpha)\right|}{(2\pi)^2},\,\,\,\,\,\,\,\,\,\,\,\,\,&k=1,
\\
0^-,\,\,\,\,\,\,\,\,\,\,\,\,\,\,\,\,&k=3,5,7,...
\end{cases}
\end{equation}
as $\eta\uparrow\eta_*$. Moreover, using (\ref{eq:k0}) on (\ref{eq:etaivp}) yields
\begin{equation}
\label{eq:time11}
\begin{split}
t_*-t\sim C_2^2\int_{\eta}^{\eta_*}{(\eta_*-\mu)^{-\frac{2k}{1+k}}d\mu}
\end{split}
\end{equation}
which, particularly for $k\geq1$ odd, implies 
\begin{equation}
\label{eq:timeinf}
\begin{split}
t_*=+\infty.
\end{split}
\end{equation}
Lastly, it can be easily shown by differentiating \eqref{eq:preserv1} with respect to $\alpha$ and then using \eqref{eq:sum}, that all higher-order spatial derivatives of $u_x$ will remain finite and continuous on $[0,1]$ for all $t\in\mathbb{R}^+$. For instance, the third-order derivative of $u$ along $\gamma$ is given by the simple formula
$$u_{xxx}(\gamma(\alpha,t),t)=u_0'''(\alpha)+\eta(t)\frac{u_0''(\alpha)^2}{\mathcal{J}(\alpha,t)}.$$
From this, we see that when $\alpha=\overline\alpha$,
$$u_{xxx}(\gamma(\overline\alpha,t),t)=u_0'''(\overline\alpha),$$
whereas, for $\alpha\neq\overline\alpha$,\,   $u_{xxx}$ stays finite for all $t>0$ since, for such choice of $\alpha$, the definition of $\overline\alpha$ and \eqref{eq:timeinf} imply that $\mathcal{J}(\alpha,t)>0$ for all $0\leq t\leq+\infty$.

Lastly, from the integral estimates \eqref{eq:k0} and \eqref{eq:k1}, and their counter-parts \eqref{eq:intest1} and \eqref{eq:intest2}, it is easy to see that if there are $1\leq l, m\leq n$, $l\neq m$, such that  $u_0''(\overline\alpha_l)\neq0$ but $u_0''(\overline\alpha_m)=0$, then the local behavior of $u_0''$ near $\overline\alpha_m$ dominates in the integral terms. This is why for a solution to be global, vanishing of $u_0''$ at $\overline\alpha_i$ for at least one $\alpha_i$ is sufficient. 
\end{proof}
\begin{remark}
\label{infinite}
Note that letting $q\to+\infty$ in (\ref{eq:expnew0}) implies that $u_0'(\alpha)\sim M_0$ in a neighbourhood of each $\overline\alpha_i$. Therefore, by letting $k\to+\infty$ above we can study regularity of solutions arising from initial data for which $u_0'$ attains its greatest value $M_0>0$ at an infinite number of locations in $[0,1]$. We find that, if solutions exist locally in time, they will persist for all time\footnote[8]{The reader may refer to \cite{Sarria1} for details on a related periodic problem.}. Also, by using a slightly extended argument (\cite{Sarria2}), we can examine regularity of solutions with initial data $u_0'$ that is, at least, $C^0(0,1)\, a.e.$ and satisfies (\ref{eq:expnew0}) for fixed $q\in\mathbb{R}^+$. In this case, our results indicate finite-time blowup in $u_x$ for $0<q<2$, but global existence in time if $q\geq2$; with $q=2$ a ``threshold'' value as it separates solutions vanishing as $t\to+\infty$ from those diverging at a finite time\footnote[9]{For $q=2$, $u_x$ converges to a \emph{non-trivial} steady-state as $t\to+\infty$.}.
\end{remark}

\section{Examples}
\label{sec:examples}

\begin{example}
Let $u_0(\alpha)=\alpha(\alpha-1)(\alpha-1/2)$, so that $u_0'(\alpha)=3\alpha^2-3\alpha+\frac{1}{2}$ achieves its greatest value $M_0=1/2$ at $\overline\alpha_{i}=\{0,1\}$, $i=1, 2$, and $u_0''(\overline\alpha_i)\neq0$ for every $i$. Also $\eta_*=2$ and, since
$$u_0'(\alpha)\sim M_0-3\alpha\qquad\quad\text{and}\qquad\quad u_0'(\alpha)\sim M_0-3\left|\alpha-1\right|$$
for $\alpha>0$ and respectively $1-\alpha>0$ small, then $u_0'$ satisfies (\ref{eq:expnew0}) for $q=1$. The integrals in (\ref{eq:mainsolu})i) evaluate to
\begin{equation}
\label{eq:k0ex1}
\begin{split}
\bar{\mathcal{K}}_0(t)=\frac{2\,\text{arctanh}(y(t))}{\sqrt{3\eta(t)(4+\eta(t))}},\,\,\,\,\,\,\,\,\,\,\,\,\,\,\,\,\,\int_0^1{\frac{u_0'(\alpha)}{\mathcal{J}(\alpha,t)^2}d\alpha}=\frac{d}{d\eta}\bar{\mathcal{K}}_0
\end{split}
\end{equation}
for $0\leq\eta<2$ and
$$y(t)=\frac{\sqrt{3\eta(t)(4+\eta(t))}}{2(1+\eta(t))}\,.$$
Using the above on (\ref{eq:mainsolu})i) and recalling (\ref{eq:percha}), we find that $M(t)=u_x(0,t)=u_x(1,t)\to+\infty$
as $\eta\uparrow2$ while, for $x\in(0,1)$,
$u_x(x,t)\to-\infty$. Finally (\ref{eq:etaivp}) and (\ref{eq:k0ex1})i) give a finite-time blowup $t_*\sim2.8$. See figure \ref{fig:fig}$(A)$ below.
\end{example}

\begin{example}
Let $u_0(\alpha)=\frac{1}{2\pi}\sin(2\pi\alpha)$. As in Example 1, $u_0'(\alpha)=\cos(2\pi\alpha)$ attains its greatest value $M_0=1$ at $\overline\alpha_i=\{0,1\}$, $i=1, 2$, while $\eta_*=1$; however, in this case $u_0''(\alpha)=-2\pi\sin(2\pi\alpha)$ vanishes at each $\overline\alpha_i$. Then for $\alpha>0$ and $1-\alpha>0$ small, $u_0'$ satisfies (\ref{eq:expnew0order}) for $k=1$ near each $\overline\alpha_i$, namely, both boundary points are zeros of $u_0''$ of order $k=1$. According to Theorem \ref{thm:global}, our solution will persist for all time. Indeed, the integrals in (\ref{eq:mainsolu})i) evaluate to
\begin{equation}
\label{eq:ge}
\begin{split}
\bar{\mathcal{K}}_0(t)=\frac{1}{\sqrt{1-\eta(t)^2}},\,\,\,\,\,\,\,\,\,\,\,\,\,\,\int_0^1{\frac{u_0^\prime(\alpha)}{\mathcal{J}(\alpha,t)^2}d\alpha}=\frac{d}{d\eta}\bar{\mathcal{K}}_0,
\end{split}
\end{equation}
both of which diverge to $+\infty$ as $\eta\uparrow\eta_*=1$. Moreover, (\ref{eq:ge}) and (\ref{eq:etaivp}) imply that $\eta(t)=\text{tanh}\,t,$\,
which we use on (\ref{eq:mainsolu})i), along with (\ref{eq:ge}), to obtain
\begin{equation}
\label{eq:uxex2}
\begin{split}
u_x(\gamma(\alpha,t),t)=\frac{\text{tanh}\,t-\cos(2\pi\alpha)}{\text{tanh}\,t\cos(2\pi\alpha)-1}.
\end{split}
\end{equation}
Clearly $$M(t)=u_x(\gamma(\overline\alpha_i,t),t)\equiv1\qquad\text{and}\qquad m(t)\equiv-1$$
for all $t\geq0$ and where\, $m(t)=\inf_{\alpha\in[0,1]}u_x(\gamma(\alpha,t),t).$ Further, for $\alpha\neq\overline\alpha_i$,
$$u_x(\gamma(\alpha,t),t)\to-1$$
as $\eta\uparrow 1.$ Finally, the above formula for $\eta$ implies that $t_*=\lim_{\eta\uparrow 1}{\text{arctanh}\,\eta}=+\infty.$
It is easy to see from (\ref{eq:ge}) and the formulas in \S\ref{sec:solution} that, in this case, the nonlocal term in \eqref{eq:one} remains constant; more particularly, $\int_0^1{u_x^2dx}\equiv1/2$. See figure \ref{fig:fig}($B$) below.
\end{example}

\begin{center}
\begin{figure}[!ht]
\includegraphics[scale=0.5]{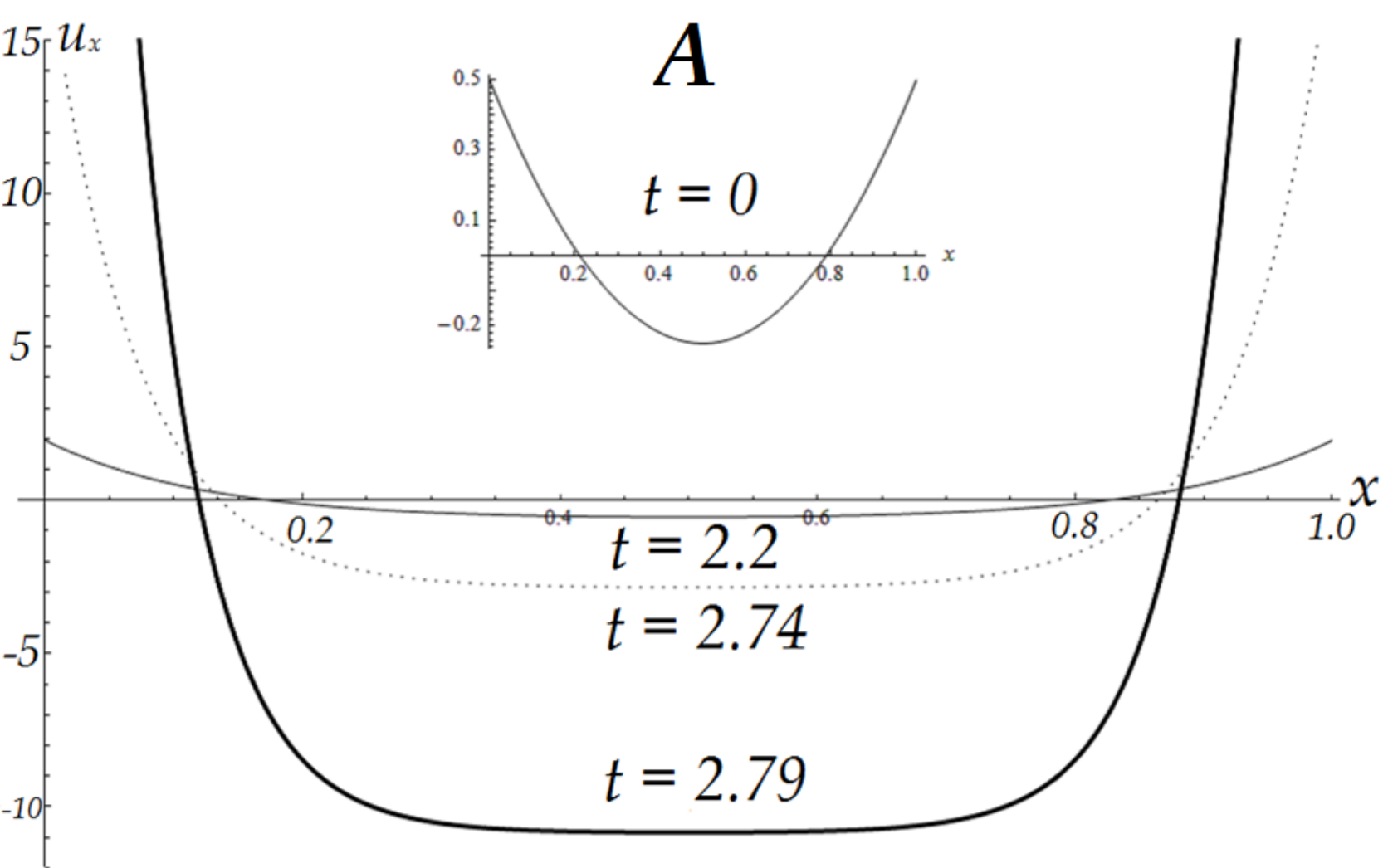} 
\includegraphics[scale=0.44]{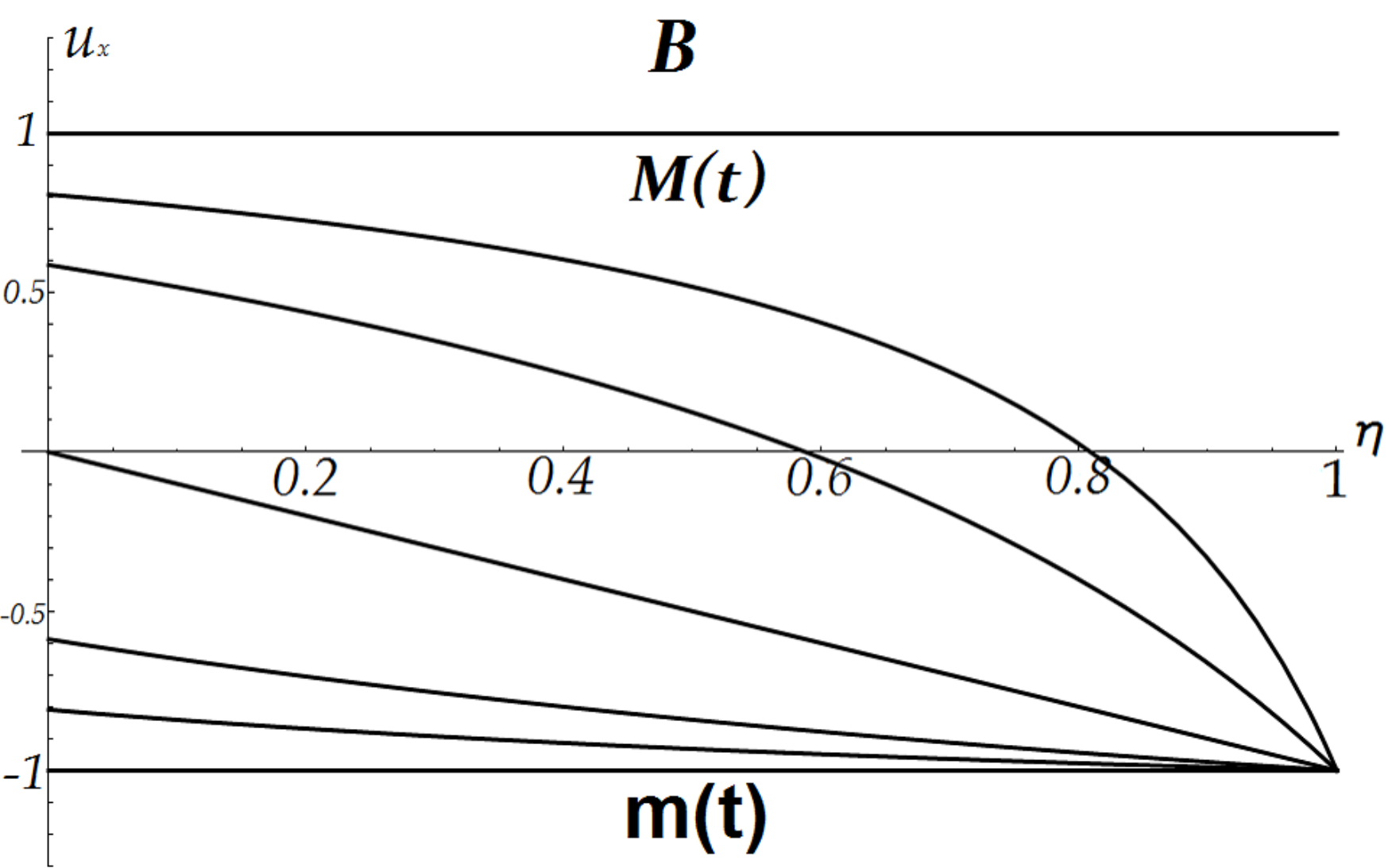} 
\caption{Figure $\textbf{A}$ for Example 1 depicts finite-time blowup $u_x(0,t)=u_x(1,t)\to+\infty$ as $t\uparrow t_*\sim2.8$ and $u_x(x,t)\to-\infty$ for $x\in(0,1)$. Figure $\textbf{B}$ for Example 2 represents the global solution $u_x\circ\gamma$ in (\ref{eq:uxex2}) as $t\to+\infty$. In this case $M(t)\equiv1$ and $m(t)\equiv-1$, whereas, for $\alpha\notin\{0,1/2,1\}$, $u_x\circ\gamma\to-1$.}
\label{fig:fig}
\end{figure}
\end{center}

\end{document}